\theoremstyle{definition}
\theoremstyle{definition}
\theoremstyle{plain}
\newtheorem{theo}{Theorem}
\theoremstyle{plain}
\theoremstyle{plain}
\theoremstyle{plain}
\theoremstyle{plain}
\newtheorem{thm}{Theorem}[subsection]
\theoremstyle{definition}
\newtheorem{ex}[thm]{Example}
\theoremstyle{definition}
\theoremstyle{definition}
\theoremstyle{definition}
\newtheorem{defin}[thm]{Definition}
\theoremstyle{definition}
\newtheorem{rem}[thm]{Remark}
\theoremstyle{plain}
\newtheorem{prop}[thm]{Proposition}
\theoremstyle{plain}
\newtheorem{lem}[thm]{Lemma}
\theoremstyle{plain}
\newtheorem{cor}[thm]{Corollary}
\theoremstyle{definition}
\theoremstyle{definition}
\theoremstyle{definition}
\theoremstyle{definition}
\theoremstyle{definition}
\theoremstyle{definition}
\newtheorem{al}[thm]{Algorithm}
\numberwithin{equation}{subsection}
\def\deg{{\rm deg}}
\def\rank{{\rm rank}}
\def\Fvtex{\mathbb{F}}
\def\deg{\mathrm{deg}}
\def\rank{\mathrm{rank}}
\newcommand{\Proj}{\operatorname{Proj}}
\newcommand{\PGL}{\operatorname{PGL}}
\long\def\@makefntext#1{\parindent 1em\noindent 
\@hangfrom{\hbox to 1.8em{\hss $^{\@thefnmark}$}}#1}
\def\@seccntformat#1{\csname the#1\endcsname. }
\renewcommand\section{\@startsection {section}{1}{\z@}%
 {-3.5ex \@plus -1ex \@minus -.2ex}%
 {2.3ex \@plus.2ex}%
 {\normalfont\large\bfseries}}
\newcommand{\keywords}[1]{\textbf{{Keywords: }} #1}
\newcommand{\MSC}[1]{\textbf{{2010 Mathematical Subject Classification: }} #1}
\begin{document}

\title
{\bf Representation of non-special
curves of genus 5\\ as 
plane sextic curves
and its application to\\ finding curves with many rational points}
\author
{Momonari Kudo\thanks{Faculty of Information Engineering, Fukuoka Institute of Technology, Japan}
\ and Shushi Harashita\thanks{Graduate School of Environment and Information Sciences, Yokohama National University, Japan}}

\maketitle

\begin{abstract}
In algebraic geometry, it is important to provide effective para\-metrizations
for families of curves, both in theory and in practice. In this paper,
we present such an effective parametrization for the moduli of
genus-$5$ curves that are neither hyperelliptic nor trigonal. Subsequently,
we construct an algorithm for a complete enumeration of non-special genus-$5$
curves having more rational points than a specified bound, where
``non-special curve'' means that the curve is non-hyperelliptic and
non-trigonal with mild singularities of the associated sextic model that
we propose. As a practical application, we implement this algorithm using
the computer algebra system MAGMA, specifically for curves over the prime
field of characteristic $3$.
\end{abstract}

\footnote[0]{\keywords Algebraic curves, Rational points, Curves of genus five, Non-hyperelliptic and non-trigonal curves\\
\MSC 14G05, 14G15, 
14H10, 14H45, 14H50, 14Q05, 68W30}

\section{Introduction}
\label{sec:Intro}

Let $K$ be a field and let $\mathbb{P}^{n}_{K}$ denote the projective
$n$-space over $K$. Parameterizing the space of curves over $K$ of given
genus is of significance in algebraic geometry, number theory, and arithmetic
geometry. For the hyperelliptic case, it is well-known that any hyperelliptic
curve over $K$ of genus $g$ is the normalization of $y^{2} = f(x)$ for
a square-free polynomial $f(x) \in K[x]$ of degree $2 g+1$ or
$2 g+2$, if the characteristic of $K$ is odd. Over the algebraic closure
$\overline{K}$, we can make $f(x)$ monic with $\deg f=2g+1$, eliminate
one coefficient and normalize another to yield $2g-1$ parameters. The
dimension of the moduli space is $2g-1$, resulting in effective parametrizations.
However, for a family of non-hyperelliptic curves, it is not easy in general
to find their defining equations with the fewest possible parameters,
by choosing a suitable model of the family. Note that for $g\ge 2$, a curve
is non-hyperelliptic if and only if the canonical sheaf is very ample (cf.\ \cite[Chap. IV, Prop. 5.2]{Har}).
For a non-hyperelliptic curve, its image of the canonical embedding is
called a \textit{canonical curve}.

For genus $3$, a suitable parametrization is reported in \cite{LRRS}.
For genus $4$, a canonically embedded curve is the complete intersection of a quadric and a cubic in ${\mathbb P}^{3}_{K}$.
In \cite{KH16} and \cite{KH17}, we discussed reductions of the space of pairs of quadratic forms and cubic forms, where reduction means representing the space by as few parameters as possible up to the action of the projective general linear group $\PGL _{4}(K)$ of degree $4$.

The next target is the case of $g=5$. In this case, it is known that there
are two types of non-hyperelliptic curves; trigonal or non-trigonal.
For the trigonal case, we previously studied a quintic model in
$\mathbb{P}^{2}_{K}$ of a trigonal curve of genus $5$ in order to enumerate
superspecial ones over small finite fields~\cite{trigonal}. Under some
assumptions, we presented reductions of quintic forms defining trigonal
curves of genus $5$; see \cite[Section 3]{trigonal} for details. The remaining
case considers that the curve is non-hyperelliptic and non-trigonal,
and it is known that the curve is realized as the complete intersection
of three quadratic hypersurfaces in $\mathbb{P}^{4}_{K}$ (cf.\ \cite[Chap.~IV, Example 5.5.3 and Exercises 5.5]{Har}).
This case is significantly more difficult than the hyperelliptic and
trigonal cases, since we need many parameters to define three quadratic
forms in five variables. Although it is natural to reduce the parameters
by using the natural action by $\PGL _{5}(K)$, it would be extremely hard
to give an efficient reduction in this manner, since the group
$\PGL _{5}(K)$ is large and complicated.

In this paper, we propose an effective parametrization for the space
of non-hyperelliptic and non-trigonal curves of genus $5$. Specifically,
it will be proved that any non-hyperelliptic and non-trigonal curve
$C$ of genus $5$ is bi-rational to a sextic $C'=V(F)$ in
$\mathbb{P}^{2}_{K}$. Note that experimentally in most cases, $C'$ has
five double points. In such a case, we say that $C$ is \textit{non-special};
see {Definition~\ref{def:generic}} (and {Lemma~\ref{lem:nonsp}}) for the rigorous
definition. We also show that the dimension of the space of non-special
curves with fixed singularities is at most $12$, which is precisely the
dimension of the moduli space of curves of genus $5$. Specifically, the
coefficients of $F$ have linear expressions in $12$ parameters, and the
expressions can all be computed.

As an application of this parametrization, we present an algorithm to enumerate
non-special curves $C$ over a finite field $K$ of genus $5$ with a prescribed
number of $L$-rational points, where $L$ is a finite extension field of
$K$. Executing the algorithm for $K = \mathbb{F}_{3}$ and
$L = \mathbb{F}_{9}$ on MAGMA, we obtain the following theorem:

\begin{theo}%
\label{thm:main}
The maximal number of $\Fvtex _{9}$-rational points on a non-special curve
$C$ of genus $5$ defined over $\Fvtex _{3}$ (and not over $\Fvtex _{9}$) is
$32$. Moreover, there are exactly four $\Fvtex _{9}$-isogeny classes of Jacobian
varieties of non-special curves $C$ of genus $5$ over $\Fvtex _{3}$ with 32
$\Fvtex _{9}$-rational points, whose Weil polynomials are
\begin{enumerate}[\textup{(4)}]
\item[\textup{(1)}] $(t^{2} + 2 t + 9)(t^{2} + 5 t + 9)^{4}$,
\item[\textup{(2)}]
$(t + 3)^{2}(t^{4} + 8 t^{3} + 32 t^{2} + 72 t + 81)^{2}$,
\item[\textup{(3)}] $(t + 3)^{4}(t^{2} + 2 t + 9)(t^{2} + 4 t + 9)^{2}$,
\item[\textup{(4)}] $(t + 3)^{6}(t^{2} + 2 t + 9)^{2}$.
\end{enumerate}
In Section~\ref{sec:new5}, examples of non-special curves $C$ over
$\Fvtex _{3}$ with $\# C (\mathbb{F}_{9}) = 32$ will be given.
\end{theo}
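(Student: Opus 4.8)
The statement is computational, so the plan is to reduce it to a provably complete finite enumeration and then read off the conclusion. By the parametrization described above, every non-special curve $C$ of genus $5$ over $\F_3$ is birational to a sextic $C'=V(F)\subset\P^2$ whose only singularities are either five double points, or one triple point together with two double points, and whose coefficients are given by explicit linear forms in $12$ parameters. Hence the set of all such $C$, up to $\F_3$-isomorphism, is covered by a finite, explicitly describable family, and the theorem follows once we (i) make this family and the underlying singularity data explicit, (ii) enumerate it on Magma while computing $\#C(\F_9)$ for each member, and (iii) post-process the curves attaining the record.

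First I would discharge the assertion ``we determined all the possible positions of singular points before the execution.'' One classifies, up to the action of $\PGL_3(\F_3)$, all configurations of singular points that can occur: either five double points, or one triple and two double points, where the configuration need only be Galois-stable — Frobenius may permute the points over $\F_9$, $\F_{27}$, $\F_{3^4}$, $\F_{3^5}$ — rather than pointwise $\F_3$-rational. For each configuration one fixes the points in a normal form; this kills most of $\PGL_3(\F_3)$ and leaves a finite residual stabilizer $G$ acting on the $12$ parameters, and one computes once and for all the linear expressions for the coefficients of $F$ in those parameters. This is a finite but delicate case analysis, and getting it complete is essential.

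Then comes the enumeration. For each configuration and each of the (at most) $3^{12}$ parameter vectors over $\F_3$, one forms $F$ and discards it unless $C'=V(F)$ is geometrically integral with exactly the prescribed singularities and geometric genus $5$, and is neither hyperelliptic nor trigonal — all checkable on the model. One then computes $\#C(\F_9)$ for the normalization $C$ of $C'$ by running over the $91$ points of $\P^2(\F_9)$ on the smooth locus of $C'$ and adding the contribution of the points of $C$ over each singular point (two or zero over a node, three or one over an ordinary triple point, according to whether the branches are rational over $\F_9$). To keep the run feasible one quotients the parameter space by the residual group $G$ and, if needed, by the $\F_9/\F_3$-Frobenius twist; the same curve may still be met several times, which is harmless. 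The maximum of the resulting counts over all surviving curves and all configurations is $32$ (comfortably below the Weil bound $9+1+2\cdot5\cdot3=40$), and completeness of the family makes this the true maximum.

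Finally, for every enumerated curve with $\#C(\F_9)=32$ I would compute the numerator of its zeta function over $\F_9$ — for instance by also counting points over $\F_{9^k}$ for $k\le 5$, or directly in Magma — obtain the Weil polynomial of $\Jac(C)$ base-changed to $\F_9$, and collect the distinct ones; the claim is that precisely the four polynomials (1)--(4) occur, each of them realized, with explicit curves exhibited in Section~\ref{sec:new5}. I expect the main obstacle to be twofold: getting the singularity-configuration classification exactly right, since overlooking a Galois-stable configuration or a degenerate position of the singular points would break completeness; and keeping the $3^{12}$-scale search within Magma's reach through a sufficiently aggressive use of the residual $\PGL_3(\F_3)$-action. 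The point-counting and Weil-polynomial extraction are themselves routine.
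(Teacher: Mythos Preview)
Your proposal is correct and follows essentially the same approach as the paper: classify the Galois-stable singularity configurations up to $\PGL_3(\F_3)$ (the paper carries this out exhaustively in Section~\ref{sec:4}), enumerate the resulting $12$-parameter families of sextics over $\F_3$ via Algorithm~\ref{alg:I}, count $\F_9$-points using the formula \eqref{formula:NumberOfRationalPoints}, and then compute Weil polynomials for the curves attaining $32$. The only minor differences are that the paper does not explicitly quotient by the residual stabilizer $G$ during the main loop (it only normalizes one coefficient to lie in $\{0,1\}$ to kill scalars), and it does not separately verify that the normalization is non-hyperelliptic and non-trigonal, relying instead on the irreducibility and genus-$5$ checks together with the definition of non-special.
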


As in the website manypoints.org~\cite{ManyPoints}, the maximal number
of $\#C(\Fvtex _{9})$ of curves $C$ of genus $5$ over $\Fvtex _{9}$ is unknown,
but is known to belong between $32$ and $35$ (this upper bound is due to Lauter~\cite{Lauter}). On the website, three examples of $C$ with 32
$\Fvtex _{9}$-rational points are listed. The above theorem gives at least
one new example. More concretely, the Weil polynomial of Fischer's example
$(x^{4} + 1)y^{4} + 2x^{3}y^{3} + y^{2} + 2xy + x^{4} + x^{2} = 0$ is
$(t^{2} + 2 t + 9)(t^{2} + 5 t + 9)^{4}$. In fact, this curve appears in
our computation, since by dividing the example by $x^{4}y^{4}$ we obtain
the sextic form
$1+X^{4} + 2XY + X^{4}Y^{2} + 2X^{3}Y^{3} + Y^{4} + X^{2}Y^{4}$ (having
distinct 5 singular points) with $X:=1/x$ and $Y:=1/y$. The example of
\cite{Ramos-Ramos} (submitted by Ritzenthaler to the site)
$y^{8} = a^{2} x^{2}(x^{2}+a^{7})$ with $a^{2}+a+2=0$ has the Weil polynomial
$(t + 3)^{6}(t^{2} + 2 t + 9)^{2}$. This curve is defined over
$\Fvtex _{9}$, but the above theorem finds a curve over the prime field
$\Fvtex _{3}$ with the same Weil polynomial. From this theorem, we see that
if one wants to find curves of genus $5$ over $\mathbb{F}_{9}$ with
$\#C(\mathbb{F}_{9})>32$, one needs to search those not defined over
$\mathbb{F}_{3}$ or curves whose sextic models have more complex singularities.

The explicit parametrization and algorithm presented in this paper have
the potential to yield valuable applications in both theoretical understanding
and computational exploration. One possible application is the classification
of non-hyperelliptic and non-trigonal curves of genus $5$ with specific
invariants (Hasse-Witt rank and Ekedahl-Oort type, etc.). Some open problems
will be summarized in Section~\ref{sec:new6}. One of our future works aims
to enumerate superspecial non-special curves of genus $5$ over finite fields.

\subsection*{Acknowledgments}
The authors thank the anonymous referees for their comments and suggestions, which have helped the authors significantly improve the paper.
The contents of this paper were presented at Effective Methods in Algebraic Geometry (MEGA 2021).
The authors also thank the anonymous referees assigned by the organization of this conference for their
comments and suggestions. All of them are taken into account for improving the presentation of the
paper. The authors are also sincerely grateful to Christophe Ritzenthaler for his helpful comments.
This work was supported by JSPS Grant-in-Aid for Scientific Research (C) 17K05196 and 21K03159, and JSPS Grant-in-Aid for Young Scientists 20K14301 and 23K12949. We would like to thank Editage (\url{www.editage .jp}) for English language editing.

\section{Non-hyperelliptic and non-trigonal curves of genus 5}
\label{sec:2}

In this section, we study genus-$5$ curves which are neither hyperelliptic
nor trigonal. In order to parameterize these curves, we propose using the
realization of these curves as (singular) plane sextic curves. This method
is much more effective than using the realization by the complete intersection
of three quadratic hypersurfaces in ${\mathbb P}^{4}$. Here we give an explicit
construction of the sextic model for our purpose. Although one may find
a more conceptional way to construct such a sextic model in a general curve
in \cite[Chap.~VI, Exercises, F-24 on p.~275]{ACGH}, it requires some assumptions
and does not explain all of this section.

\subsection{Sextic models}
\label{subsec:sextic}

The canonical model of a non-hyperelliptic and non-trigonal curve of genus
$5$ is the intersection of three quadrics in ${\mathbb P}^{4}$. Let
$C$ be such a curve, say
$V(\varphi _{1}, \varphi _{2}, \varphi _{3})$ in ${\mathbb P}^{4}$ for
three quadratic forms $\varphi _{1}$, $\varphi _{2}$, and
$\varphi _{3}$ in $x_{0},x_{1},x_{2},x_{3},x_{4}$. A sextic model associated
to $C$ is obtained by additional data: two points $P$ and $Q$ on $C$. We
find a short explanation of this construction in
\cite[Chap.~IV, Example 5.5.3]{Har}, but that appears away from the context
of looking at the space of curves. The sextic model is defined by the scheme-theoretic
image of the birational map defined by the (incomplete) linear system defined
by the three-dimensional subspace consisting of elements of
$H^{0}(C,\varOmega _{C})$ vanishing at $P$ and $Q$, but here we give an
explicit construction toward parameterizing the space of these curves.
By a linear transformation, we may assume that
\begin{equation*}
P=(1:0:0:0:0) \quad \text{and} \quad Q=(0:0:0:0:1).
\end{equation*}
Since $\varphi _{i}$ vanishes at $P$ and $Q$, the quadratic forms
$\varphi _{i}$ ($i=1,2,3$) must be of the form
%
\begin{equation}
\label{ThreeQuadForms}
\varphi _{i} =a_{i} \cdot x_{0}x_{4} + f_{i}\cdot x_{0} + g_{i}\cdot x_{4}
+ h_{i}
\end{equation}
with $a_{i} \in K$, where $f_{i}$ and $g_{i}$ are linear forms in
$x_{1},x_{2},x_{3}$, and where $h_{i}$ is a quadratic form in
$x_{1},x_{2},x_{3}$. We shall find an equation only in
$x_{1},x_{2},x_{3}$ from $\varphi _{1}=\varphi _{2}=\varphi _{3}=0$. Put
\begin{equation*}
(v_{1}, v_{2}, v_{3}) := -(h_{1},h_{2},h_{3})\cdot \Delta _{A},
\end{equation*}
where $\Delta _{A}$ is the adjugate matrix of
\begin{equation*}
A :=
\begin{pmatrix}
a_{1} & a_{2} & a_{3}
\\
f_{1} & f_{2} & f_{3}
\\
g_{1} & g_{2} & g_{3}
\end{pmatrix}
.
\end{equation*}
Note that $v_{i}$ ($i=1,2,3$) are polynomials in $x_{1},x_{2},x_{3}$. Since
%
\begin{equation}
\label{Recover}
(v_{1},v_{2},v_{3})=\det (A) (x_{0}x_{4},x_{0},x_{4})
\end{equation}
on $C$, we have the sextic equation
%
\begin{equation}
\label{def:C'}
\det (A)\cdot v_{1}-v_{2}v_{3}=0
\end{equation}
in $x_{1},x_{2},x_{3}$. Let $C'$ be the curve defined by {\eqref{def:C'}} in ${\mathbb P}^{2}=\Proj K[x_{1},x_{2},x_{3}]$. We claim
that this constructs a birational map from $C$ to $C'$. It suffices to
see that $\det A \ne 0$ holds generically, since $x_{0}$ and $x_{4}$ are
recovered from $v_{1},v_{2},v_{3}$ where $\det A \ne 0$ by {\eqref{Recover}}. If $\det A$ were identically zero, then from {\eqref{ThreeQuadForms}} we have $(3-\rank A)$ quadratic forms only in
$x_{1},x_{2},x_{3}$. If $\rank A<2$, then this contradicts that $C$ is
irreducible. If $\rank A = 2$, then let $\psi $ be the quadratic form in
$x_{1},x_{2},x_{3}$; then there is a dominant morphism
$C \to V(\psi ) \subset {\mathbb P}^{2}$, which turns out to be of degree
$2$. This contradicts the assumption that $C$ is not hyperelliptic.

This construction of the sextic $C'$ from $C$ with $P$ and $Q$ has the
following properties.

\begin{prop}%
\label{ConstructionIsCanonical}
\begin{enumerate}[\textup{(3)}]
\item[{\textup{(1)}}] Suppose
$\langle \varphi _{1},\varphi _{2},\varphi _{3} \rangle = \langle
\phi _{1},\phi _{2},\phi _{3}\rangle $ as a linear space over a field defining
$\varphi _{i}$ and $\phi _{i}$ for all $i=1$, $2$, and $3$. Then $C'$ obtained
from $\varphi _{1},\varphi _{2},\varphi _{3}$ is the same as that obtained
from $\phi _{1},\phi _{2},\phi _{3}$.
\item[{\textup{(2)}}] The coordinate change
\begin{equation*}
(x_{0},x_{1},x_{2},x_{3},x_{4}) \mapsto (x_{4}, x_{1},x_{2},x_{3},x_{0})
\end{equation*}
does not change the sextic.
\item[{\textup{(3)}}] The coordinate change
\begin{equation*}
(x_{0},x_{1},x_{2},x_{3},x_{4}) \mapsto (x_{0} + \phi , x_{1},x_{2},x_{3},x_{4}+
\psi )
\end{equation*}
with any linear $\phi ,\psi $ in $x_{1},x_{2},x_{3}$ does not change the
sextic.
\end{enumerate}
\end{prop}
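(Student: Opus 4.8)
\begin{pf}[Plan of proof]
The plan is to observe that each of the three operations affects the construction of $C'$ only through an explicit change of the matrix $A$ and of the row vector $(h_1,h_2,h_3)$, and then to push these changes through the definitions of $(v_1,v_2,v_3)=-(h_1,h_2,h_3)\Delta_A$ and of $\det A$. The two tools needed are the multiplicativity of the adjugate, $\Delta_{BC}=\Delta_C\Delta_B$ (a polynomial identity, hence valid over the ring $K[x_1,x_2,x_3]$), together with the relation $B\,\Delta_B=\det(B)\,I$. In each case one finds that the new defining form $\det(A')v_1'-v_2'v_3'$ equals a nonzero element of $K$ times the old form $\det(A)v_1-v_2v_3$, hence cuts out the same sextic $C'\subset\mathbb P^2$.

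For (1), a change of generators $(\phi_1,\phi_2,\phi_3)=(\varphi_1,\varphi_2,\varphi_3)M$, with $M$ invertible over the field in question, multiplies each of the rows $(a_1,a_2,a_3)$, $(f_1,f_2,f_3)$, $(g_1,g_2,g_3)$ of $A$ on the right by $M$, so $A'=AM$ and $(h_1',h_2',h_3')=(h_1,h_2,h_3)M$. Then $\Delta_{A'}=\Delta_M\Delta_A$, whence $(v_1',v_2',v_3')=-(h_1,h_2,h_3)\,(M\Delta_M)\,\Delta_A=\det(M)\,(v_1,v_2,v_3)$ because $M\Delta_M=\det(M)I$, while $\det A'=\det(M)\det A$; the new form is therefore $\det(M)^2$ times the old one.

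For (2), the substitution $x_0\leftrightarrow x_4$ turns $\varphi_i=a_ix_0x_4+f_ix_0+g_ix_4+h_i$ into $a_ix_0x_4+g_ix_0+f_ix_4+h_i$, i.e.\ it interchanges the last two rows of $A$ and leaves $(h_1,h_2,h_3)$ unchanged. Hence $A'=SA$, where $S$ is the permutation matrix of the transposition of rows $2$ and $3$; since $\det S=-1$ and $\Delta_S=-S$, one gets $\det A'=-\det A$ and $(v_1',v_2',v_3')=-(v_1,v_2,v_3)S=(-v_1,-v_3,-v_2)$, so that $\det(A')v_1'-v_2'v_3'=(-\det A)(-v_1)-(-v_3)(-v_2)=\det(A)v_1-v_2v_3$ exactly.

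Part (3) is the case that requires real work, and I expect the bookkeeping there to be the main obstacle. Under $x_0\mapsto x_0+\phi$, $x_4\mapsto x_4+\psi$ with $\phi,\psi$ linear in $x_1,x_2,x_3$, the coefficients become $a_i'=a_i$, $f_i'=f_i+a_i\psi$, $g_i'=g_i+a_i\phi$, and $h_i'=h_i+a_i\phi\psi+f_i\phi+g_i\psi$. The first three relations say $A'=EA$ with the unipotent matrix $E=\left(\begin{smallmatrix}1&0&0\\\psi&1&0\\\phi&0&1\end{smallmatrix}\right)$, so $\det A'=\det A$ and $\Delta_{A'}=\Delta_A\,E^{-1}$, where $E^{-1}=\left(\begin{smallmatrix}1&0&0\\-\psi&1&0\\-\phi&0&1\end{smallmatrix}\right)$. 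The key point is that the fourth relation can be rewritten as $(h_1',h_2',h_3')=(h_1,h_2,h_3)+(\phi\psi,\phi,\psi)\,A$, since $(\phi\psi,\phi,\psi)A=\phi\psi(a_1,a_2,a_3)+\phi(f_1,f_2,f_3)+\psi(g_1,g_2,g_3)$; feeding this into the definition of $(v_1',v_2',v_3')$ and using $A\Delta_A=\det(A)I$ gives $(v_1',v_2',v_3')=\bigl[(v_1,v_2,v_3)-\det(A)(\phi\psi,\phi,\psi)\bigr]E^{-1}$, that is $v_1'=v_1-\psi v_2-\phi v_3+\det(A)\phi\psi$, $v_2'=v_2-\det(A)\phi$, $v_3'=v_3-\det(A)\psi$. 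Substituting these into $\det(A)v_1'-v_2'v_3'$ and expanding, all the terms involving $\phi$ or $\psi$ cancel in pairs, leaving exactly $\det(A)v_1-v_2v_3$. This settles all three parts.
\end{pf}
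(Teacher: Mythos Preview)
Your proof is correct and follows essentially the same approach as the paper: track the induced change on $A$ and on $(h_1,h_2,h_3)$, then push it through the adjugate to see how $(v_1,v_2,v_3)$ and the sextic transform. Your treatment of (3) is in fact more informative than the paper's, which simply states that ``the equation $\det(A')v'_1-v'_2v'_3=\det(A)v_1-v_2v_3$ is derived by a straightforward computation, which is tedious but has some beautiful cancellations''; your key observation $(h_1',h_2',h_3')=(h_1,h_2,h_3)+(\phi\psi,\phi,\psi)A$ combined with $A\Delta_A=\det(A)I$ makes those cancellations transparent rather than miraculous. (Two incidental remarks: in (1) your scalar factor $\det(M)^2$ is the correct one, and in (2) your $(v_1',v_2',v_3')=(-v_1,-v_3,-v_2)$ differs from the paper's stated $(-v_1,v_3,v_2)$ by a harmless sign on the last two entries---both yield the same sextic.)
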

\begin{proof}
(1) Let $B$ be the square matrix of size $3$ such that
$(\varphi _{1},\varphi _{2},\varphi _{3})B=(\phi _{1},\phi _{2},\phi _{3})$.
Let $v_{i}'$ be $v_{i}$ associated to
$\phi _{1},\phi _{2},\phi _{3}$. Then
\begin{equation*}
(v'_{1}, v'_{2}, v'_{3}) = -(h_{1},h_{2},h_{3})B\cdot \det (AB) (AB)^{-1}
= \det (B) (v_{1},v_{2},v_{3}).
\end{equation*}
Hence we have
$\det (AB) v'_{1} - v'_{2}v'_{3} = \det (B) (\det (A)v_{1}-v_{2}v_{3})$.

(2) The matrix $A$ for the new coordinate is
\begin{equation*}
A' :=
\begin{pmatrix}
a_{1} & a_{2} & a_{3}
\\
g_{1} & g_{2} & g_{3}
\\
f_{1} & f_{2} & f_{3}
\end{pmatrix}
.
\end{equation*}
Let $v_{i}'$ be $v_{i}$ associated to the new coordinate. It is straightforward
to see $(v'_{1},v'_{2},v'_{3}) = (-v_{1},v_{3},v_{2})$. Thus the sextic
does not change.

(3) Let $A'$, $h'_{i}$, and $v'_{i}$ be the matrix $A$, the quadratic form
$h_{i}$, and $v_{i}$ for the new coordinate. Then we have
\begin{equation*}
A'=
\begin{pmatrix}
1 & 0 & 0
\\
\psi & 1 & 0
\\
\phi & 0 & 1
\end{pmatrix}
\begin{pmatrix}
a_{1} & a_{2} & a_{3}
\\
f_{1} & f_{2} & f_{3}
\\
g_{1} & g_{2} & g_{3}
\end{pmatrix}
\end{equation*}
and $h'_{i} = h_{i}+a_{i}\phi \psi + f_{i}\phi + g_{j} \psi $. The equation
$\det (A')v'_{1}-v'_{2}v'_{3} = \det (A)v_{1}-v_{2}v_{3}$ is derived from
a straightforward computation, which is tedious but has some beautiful
cancellations.
\end{proof}

From {Proposition~\ref{ConstructionIsCanonical}} (1) and (2), we have the following corollary:

\begin{cor}
\label{cor2.1.2}
Let $K$ be a field. If $C$ and the divisor $P+Q$ on $C$ for distinct two
points $P$ and $Q$ are defined over $K$, then the associated sextic
$C'$ is defined over $K$.
\end{cor}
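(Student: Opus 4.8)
The plan is a Galois-descent argument built entirely on the canonicity statements of Proposition~\ref{ConstructionIsCanonical}. Fix an algebraic closure $\bar K$ of $K$ and let $L\subseteq\bar K$ be the field of definition of the two points $P,Q$ of $C$. Since $P+Q$ is a $K$-rational divisor with distinct support, the Galois orbit of $P$ is contained in $\{P,Q\}$; hence $L/K$ is either trivial (both points $K$-rational) or a separable quadratic extension, in which case the nontrivial $\sigma\in\Gal(L/K)$ interchanges $P$ and $Q$.

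If $P,Q\in C(K)$ there is nothing subtle: one can already choose over $K$ the linear change of coordinates of $\mathbb{P}^4$ sending $P\mapsto(1:0:0:0:0)$ and $Q\mapsto(0:0:0:0:1)$ (extend lifts of $P$ and $Q$ to a $K$-basis of $K^5$), so the forms $\varphi_i$ of \eqref{ThreeQuadForms}, the matrix $A$, the quadratic forms $h_i$, the forms $v_i$, and hence $C':\det(A)v_1-v_2v_3=0$, are all defined over $K$. So assume $[L:K]=2$ with $\sigma(P)=Q$. Choose an $L$-rational coordinate change $g$ of $\mathbb{P}^4_L$ with $g(P)=(1:0:0:0:0)$, $g(Q)=(0:0:0:0:1)$, and let $C'$ be the resulting sextic over $L$; it suffices to show $\sigma(C')=C'$, for then $C'$ descends to $K$. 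Applying $\sigma$ to the whole recipe and using that $C$ — equivalently the net of quadrics vanishing on it — is defined over $K$, one sees that $\sigma(C')$ is precisely the sextic produced from $C$ in the coordinates $\sigma(g)$. Since $(1:0:0:0:0)$ and $(0:0:0:0:1)$ are $K$-rational, $\sigma(g)$ carries $\sigma(P)=Q$ to $(1:0:0:0:0)$ and $\sigma(Q)=P$ to $(0:0:0:0:1)$, so $\sigma(g)$ differs only by a transformation fixing both of these points from $\tau\circ g$, where $\tau$ is the swap $(x_0,x_1,x_2,x_3,x_4)\mapsto(x_4,x_1,x_2,x_3,x_0)$. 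Now Proposition~\ref{ConstructionIsCanonical}(2) says passing from $g$ to $\tau\circ g$ does not change the sextic, Proposition~\ref{ConstructionIsCanonical}(1) absorbs any change of generators of the net, and Proposition~\ref{ConstructionIsCanonical}(3) absorbs the unipotent shears $x_0\mapsto x_0+\phi$, $x_4\mapsto x_4+\psi$; chaining these identifications gives $\sigma(C')=C'$.

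The step I expect to demand the most care is the analysis of the \emph{residual} freedom in the normalizing transformation $g$. The stabilizer in $\PGL_5$ of the ordered pair $(1:0:0:0:0),(0:0:0:0:1)$ consists, besides the shears of Proposition~\ref{ConstructionIsCanonical}(3), of independent rescalings of $x_0$ and of $x_4$ and of an arbitrary linear change among $x_1,x_2,x_3$. A short computation in the style of the proof of Proposition~\ref{ConstructionIsCanonical} shows the rescalings only multiply the sextic equation by a nonzero scalar, so the only genuinely surviving ambiguity is that a linear change on $x_1,x_2,x_3$ transports $C'$ by the corresponding element of $\PGL_3$ on the ambient $\mathbb{P}^2$. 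Consequently the clean conclusion is that $C'$ is defined over $K$ as a curve in $\mathbb{P}^2$ up to $K$-isomorphism; to pin down a genuinely $K$-rational model one runs, in addition, the standard $\mathbb{Z}/2$-cocycle (Hilbert~90) argument to choose $g$ so that $\tau\circ\sigma(g)$ and $g$ differ by rescalings and shears alone. Everything outside this bookkeeping is formal.
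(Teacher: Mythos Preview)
Your argument is correct and follows the same Galois-descent philosophy as the paper, but it is considerably more elaborate than what the authors have in mind. The paper derives the corollary citing only parts (1) and (2) of Proposition~\ref{ConstructionIsCanonical}; neither part (3) nor any Hilbert~90 bookkeeping appears.

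The reason the paper can be so brief is that one may choose the normalizing coordinates cleverly rather than generically. Since the secant line $\overline{PQ}\subset\mathbb{P}^4$ is $K$-rational, one can pick $x_1,x_2,x_3\in V^*$ over $K$ cutting it out, and then choose $x_0,x_4$ over $L$ so that $\sigma(x_0)=x_4$ and $\sigma(x_4)=x_0$ (e.g.\ take $x_0$ a linear form vanishing at $Q$ and set $x_4=\sigma(x_0)$). With this choice, applying $\sigma$ to the entire construction amounts precisely to (i) replacing the basis $(\varphi_1,\varphi_2,\varphi_3)$ of the net by another basis and (ii) swapping $x_0\leftrightarrow x_4$; parts (1) and (2) then give $\sigma(F)=F$ up to scalar on the nose, and there is no residual $\PGL_3$- or shear-ambiguity to chase.

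Your route---take an arbitrary $g$, compare $\sigma(g)$ with $\tau\circ g$, and then absorb the discrepancy using (3), a rescaling computation, and finally Hilbert~90 for the $\GL_3$-block acting on $x_1,x_2,x_3$---is a valid and more systematic way to reach the same conclusion. What it buys is that you never have to spot the ``good'' choice of $x_0,x_4$; what it costs is the extra stabilizer analysis that you yourself flag as the most delicate step. If you want to align with the paper, simply front-load the coordinate choice above and drop the last paragraph.
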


This corollary suggests us to use the sextic realization to find and/or
enumerate curves over $\Fvtex _{q}$ of genus $5$ with many $\Fvtex _{q^{2}}$-rational
points, since the assumption that $P+Q$ is defined over $\Fvtex _{q}$ is satisfied
if $C$ has many (a few) $\Fvtex _{q^{2}}$-rational points. The case we mainly
treat in this paper is the case of $q=3$.
The maximal number
$\#C(\Fvtex _{9})$ for curves of genus $5$ over $\Fvtex _{9}$ is unknown, as mentioned
in Section~\ref{sec:Intro}. We give an implementation of our algorithm,
restricting ourselves to the case where $C$ is defined over
$\Fvtex _{3}$.

\subsection{Non-special curves of genus 5}
\label{subsec:generic}

Let $C$, $P$, and $Q$ be as in Subsection \ref{subsec:sextic}, especially
$C$ is non-hyperelliptic and non-trigonal. Let $C'$ be the sextic associated
to $(C,P+Q)$ obtained in Subsection \ref{subsec:sextic}. First we see that
$C'$ has singular points, because the arithmetic genus $g(C')$ of
$C'$ is $10$, but $g(C)$ is $5$. Note that $C$ is isomorphic to the desingularization
of $C'$. In general, we have the following formula:
%
\begin{equation}
\label{inequality:genus-multiplicity}
g(C) \le g(C') - \sum _{P'\in C'} \frac{m_{P'}(m_{P'}-1)}{2},
\end{equation}
where $m_{P'}$ is the multiplicity of $C'$ at $P'$.

\begin{lem}%
\label{lem:trigonal-multiplicity}
The sextic $C'$ does not have any singular point of multiplicity $3$, {i.e.},
$m_{P'}\ne 3$.
\end{lem}
\begin{proof}
By a linear transformation we may assume that $(0:0:1)$ is a triple point
of $C'$. Since the defining equation of $C'$ is of degree $3$ as a polynomial
in $z$, the rational map $C' \to {\mathbb P}^{1}$ sending the point $(x:y:z)$ to
$(x:y)$ defines a dominant morphism $C \to {\mathbb P}^{1}$ of degree
$3$. This contradicts the assumption that $C$ is non-trigonal.
\end{proof}
In spite of the lemma, the sextic $C'$ may have various kinds of singularities.
It would be natural that we restrict ourselves to a case that the singularity
is milder. This paper studies the \textit{non-special} case defined just after
the next lemma.

\begin{lem}%
\label{lem:nonsp}
The following are equivalent:
\begin{enumerate}[\textup{(2)}]
\item[\textup{(1)}] The inequality {\eqref{inequality:genus-multiplicity}} becomes
an equality, namely
%
\begin{equation}
\label{GenericGenusFormula}
g(C) = g(C') - \sum _{P'\in C'} \frac{m_{P'}(m_{P'}-1)}{2}.
\end{equation}
\item[\textup{(2)}] $C'$ has distinct five singular points with multiplicity
two.
\end{enumerate}
\end{lem}
\begin{proof}
By $g(C)=5$, in order to have {\eqref{GenericGenusFormula}}, the multiple
set of $m_{P'}$ with $m_{P'} \ge 2$ is either of $\{2,2,2,2,2\}$ and
$\{3,2,2\}$. 
However, $\{3,2,2\}$ does not occur by {Lemma~\ref{lem:trigonal-multiplicity}}.
\end{proof}

\begin{defin}%
\label{def:generic}
We say that $(C,P+Q)$ is \textit{non-special} if the associated sextic model
$C'$ satisfies either (and therefore both) of (1) and (2) of {Lemma~\ref{lem:nonsp}}.
\end{defin}

\begin{rem}
\label{rem2.2.4}
By experiments with computers, we see that in most cases $(C,P+Q)$ is non-special.
A rigorous proof of this will be given in a future paper, as it requires
many pages, cf.\ {Remark~\ref{rem:non-mild}}.
\end{rem}

In the following, we assume that $(C, P+Q)$ is non-special. Let
${P_{1},\ldots , P_{5}}$ be the singular points on the associated
$C'$. If $(C,P+Q)$ is defined over $K$, then the divisor
$P_{1} + \cdots + P_{5}$ is defined over $K$. The absolute Galois group
$G_{K}$ of $K$ makes permutations of $\{P_{1},\ldots , P_{5}\}$. It is
straightforward to see that the pattern of the $G_{K}$-orbits in
$\{P_{1},\ldots , P_{5}\}$ is either of $(1,1,1,1,1)$, $(1,1,1,2)$,
$(1,2,2)$, $(1,1,3)$, $(2,3)$, $(1,4)$, and $(5)$, where for example
$(1,2,2)$ means that $\{P_{1},\ldots , P_{5}\}$ consists of three
$G_{K}$-orbits each of which has cardinality $1$, $2$, and $2$ respectively.
In Section~\ref{sec:4}, in the case of $K=\Fvtex _{3}$ we shall give an explicit
classification of positions of $\{P_{1},\ldots , P_{5}\}$ in
$\mathbb P^{2}$ up to $\PGL _{3}(\Fvtex _{3})$.

Here is another constraint for the position of the singular points. From
now on, we put $x=x_{1}$, $y=x_{2}$, and $z=x_{3}$ for
$x_{1},x_{2},x_{3}$ in Subsection \ref{subsec:sextic} and consider
$K[x,y,z]$ as the coordinate ring of ${\mathbb P}^{2}$.

\begin{prop}%
\label{Reducibility}
If distinct four elements of $\{P_{1},P_{2},P_{3},P_{4},P_{5}\}$ are contained
in a line, then $C'$ is geometrically reducible (i.e.,
$C_{\overline K}:=C \times _{\mathrm{Spec}(K)} \mathrm{Spec}(
\overline{K})$ is reducible, where $\overline K$ is the algebraic closure
of $K$).
\end{prop}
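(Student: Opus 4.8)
The plan is to use B\'ezout's theorem over $\overline K$. Since the statement is about geometric reducibility, I would work geometrically from the start, replacing $K$ by $\overline K$ so that one may speak of the individual points $P_i$ and of lines through them. After renumbering, suppose $P_1,P_2,P_3,P_4$ lie on a common line $L$; as these four points are distinct, $L$ is the unique line through any two of them, and $L$ is defined over $\overline K$.

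First I would record the local intersection numbers. In case (I) each $P_i$ is a point of multiplicity exactly $2$ on $C'$, so any line through $P_i$ — in particular $L$ — meets $C'$ at $P_i$ with intersection multiplicity at least the product of the two multiplicities, i.e. $I(P_i,\,L\cap C') \ge 2\cdot 1 = 2$, regardless of whether $L$ happens to be tangent to a branch of $C'$ at $P_i$. Summing over $i=1,2,3,4$ gives $\sum_{i=1}^{4} I(P_i,\,L\cap C') \ge 8$.

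Next I would invoke B\'ezout. If $L$ were not a component of $C'$, then $L\cap C'$ would be finite and $\sum_{P} I(P,\,L\cap C') = (\deg L)(\deg C') = 1\cdot 6 = 6$, the sum being over all intersection points; this contradicts the lower bound $8$ coming from the four collinear double points. Hence $L$ is a component of $C'$: writing $\ell$ for the linear form with $V(\ell)=L$, the sextic form $F$ defining $C'$ is divisible by $\ell$, say $F=\ell\cdot G$ with $G$ a quintic form. Then $C'_{\overline K} = V(\ell)\cup V(G)$ is a union of two proper closed subsets, so it is reducible, as claimed.

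I do not expect a genuine obstacle here; this is a direct counting application of B\'ezout's theorem. The only points deserving care are (a) the reduction to $\overline K$ so that the points $P_i$ and the line $L$ are meaningful, and (b) the use that a point of multiplicity $\ge 2$ contributes intersection multiplicity $\ge 2$ to any line through it (the local intersection number dominating the product of the multiplicities of the two curves). It is worth noting in passing that this is exactly the degeneration excluded by the ``no four collinear'' hypothesis in Proposition \ref{dim=12}, so the two results fit together consistently.
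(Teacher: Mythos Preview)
Your argument is correct, and it is genuinely different from the paper's. The paper proceeds by an explicit coordinate computation: after moving the four collinear points to $(0:0:1)$, $(1:0:1)$, $(c:0:1)$, $(d:0:1)$ on the line $y=0$, it writes down the constraints that the $x^iy^0$-coefficients of the dehomogenized sextic (for $i=2,\ldots,6$) must satisfy in order for the points to be singular, assembles a $5\times 5$ matrix of Vandermonde type, checks its determinant is $c^4d^4(c-1)^2(d-1)^2(c-d)^4\neq 0$, and concludes that all these coefficients vanish, so $y$ divides the sextic.

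Your B\'ezout argument reaches the same conclusion --- that the line is a component --- without coordinates or determinants. It is shorter and more conceptual, and works uniformly in any characteristic since the inequality $I_P(L,C')\ge m_P(L)\,m_P(C')$ is standard over any algebraically closed field. What the paper's computation buys is a direct link to the linear algebra of Proposition~\ref{dim=12} (the same style of minors), and it would adapt mechanically to variants; what your approach buys is clarity and robustness, and it makes transparent why ``four'' is exactly the threshold ($3\cdot 2=6$ is still allowed, $4\cdot 2=8>6$ is not). Either proof is acceptable.
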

\begin{proof}
Suppose that $P_{1},\ldots , P_{4}$ are contained in a line. By a linear
transformation over $\overline K$, we may assume
\begin{equation*}
P_{1} = (0:0:1),\quad P_{2}=(1:0:1),\quad P_{3}=(c:0:1),\quad P_{4}=(d:0:1),
\end{equation*}
where $0$, $1$, $c$, and $d$ are mutually distinct. Let $f$ be the sextic (in
$x$ and $y$) of $C'$ obtained by substituting $1$ for $z$. Since
$P_{1}$ is a {singular point}, the smallest degree of the non-zero terms
of $f$ is two. Let $a_{i}$ be the $x^{i}y^{0}$-coefficient of $F$ for
$i=2,3,4,5,6$. Then the constant term and the degree-one term of the Taylor
expansion of $f$ at $(x,y)=(b,0)$ are $\sum _{i=2}^{6} b^{i} a_{i}$ and
$\sum _{i=2}^{6} ib^{i-1} a_{i}$ for $b \in \{1,c,d\}$. These are zero,
as $C$ is singular at $P_{i}$ for $i=1,\ldots , 5$. Since
\begin{equation*}
\det
\begin{pmatrix}
1 & 1 & 1 & 1 & 1
\\
c^{2} & c^{3} & c^{4} & c^{5} & c^{6}
\\
2c & 3c^{2} & 4c^{3} & 5 c^{4} & 6 c^{5}
\\
d^{2} & d^{3} & d^{4} & d^{5} & d^{6}
\\
2d & 3d^{2} & 4d^{3} & 5 d^{4} & 6d^{5}
\end{pmatrix}
= c^{4}d^{4}(c-1)^{2}(d-1)^{2}(c-d)^{4},
\end{equation*}
we have that $a_{i}$ are zero for all $i=2, \ldots , 6$. This says that
$f$ is divided by $y$, in particular $C'$ is geometrically reducible.
\end{proof}

Let us study the space of $C'$ having five singular points with multiplicity
$2$. The number of monomials of degree $6$ in three variables is
$28$. For each singular point, we have three linear equations which assure
that the point is singular. Considering a scalar multiplication to the
whole sextic, the number of free parameters is
$28 - 5\times 3 -1 = 12$, see {Proposition~\ref{dim=12}} below for the linear
independence of $5\times 3$ linear equations. This is precisely the dimension
of the moduli space of curves of genus $5$. Note that both of the dimension
of the space of two points on $C$ and that of five points on
${\mathbb P^{2}}$ modulo the action of $\PGL _{3}$ are equal to two. This
says that the parametrization by the sextic models is very effective.

\begin{prop}%
\label{dim=12}
Let $\{P_{1},\ldots ,P_{5}\}$ be distinct five points of
${\mathbb P}^{2}(\overline{K})$. Assume that any distinct four points in
$\{P_{1},\ldots ,P_{5}\}$ are not contained in a line. Then the space of
sextics with double points at $P_{1},\ldots , P_{5}$ up to scalar multiplications
is of dimension $12$.
\end{prop}
\begin{proof}
Renumbering the subscripts of the five points if necessary, we may assume
that $P_{1}$, $P_{2}$, and $P_{3}$ are not in a line. By a linear transformation
by an element of $\PGL _{3}(\overline{K})$, we may also assume that
$P_{1}=(1:0:0)$, $P_{2}=(0:1:0)$, and $P_{3}=(0:0:1)$. Considering the permutation
of $\{x,y,z\}$ by the symmetric group of degree $3$, we may assume that
$P_{4} = (b:c:1)$ and $P_{5}=(d:e:1)$, by the assumption of four points.
Then, any sextic with $z=1$ having singularity at $P_{1}$, $P_{2}$, and
$P_{3}$ is of the form
\begin{eqnarray*}
F&=&a_{1} x^{4} y^{2} + a_{2} x^{4} y+ a_{3} x^{4} + a_{4} x^{3} y^{3}
+ a_{5} x^{3} y^{2} + a_{6} x^{3} y + a_{7} x^{3} + a_{8} x^{2} y^{4} +
a_{9} x^{2} y^{3}+ a_{10} x^{2} y^{2}
\\
&&+ a_{11} x^{2} y + a_{12} x^{2} + a_{13} x y^{4} + a_{14} x y^{3} + a_{15}
x y^{2} {+} a_{16} x y+ a_{17} y^{4} + a_{18}y^{3} + a_{19}y^{2}.
\end{eqnarray*}
The condition that this sextic is singular at $P_{4}$ (resp.
$P_{5}$) is described by three linear equations in
$a_{1},\ldots , a_{19}$ obtained from the condition that $F$,
$\frac{\partial F}{\partial x}$, and $\frac{\partial F}{\partial y}$ are
zero at $P_{4}$ (resp. $P_{5}$). The coefficients of these six linear
equation make a $6\times 19$ matrix $M$. It suffices to show that
$M$ is of rank $6$. By a direct computation, the determinant of the minor
(square matrix of size $6$) corresponding to the coefficients of
$a_{7},a_{11},a_{12},a_{15},a_{18},a_{19}$ is $(be-cd)^{5}(be+cd)$, that of $a_{6}, a_{7}, a_{9}, a_{10}, a_{11}, a_{12}$ is
$b^{6}d^{6}(c-e)^{5}$, and that of
$a_{5}, a_{10}, a_{14}, a_{15}, a_{18}, a_{19}$ is
$c^{6}e^{6}(b-d)^{5}$. If the rank of $M$ were less than $6$, then
the equalities $(be-cd)^{5}(be+cd)=0$, $b^{6}d^{6}(c-e)^{5}=0$, and
$c^{6}e^{6}(b-d)^{5}=0$ have to hold. However, this contradicts the assumption
of four points. Indeed, if $b=0$, then we have $c=0$ or $d=0$ by the first
equation, and then $P_{1}=P_{4}$ holds or
$P_{1}$, $P_{3}$, $P_{4}$, and $P_{5}$ is contained in the line $y=0$. Thus
$b\ne 0$. Similarly $c$, $d$, and $e$ are not zero. Then we have $b=d$ and
$c=e$, which says $P_{3}=P_{4}$. This is absurd. Hence $M$ has to be of
rank $6$.
\end{proof}

%
\begin{rem}%
\label{rem:moduli}
Let ${\mathbf{M}}_{g}$ be the moduli space of curves of genus $g$. Let
$S$ be the subvariety of ${\mathbf{M}}_{5}$ consisting of non-hyperelliptic
and non-trigonal curves. Let ${\mathcal C}' \to \mathcal S$ be the family
of sextic curves $C'$ in ${\mathbb P}^{2}$ whose desingularization
$C\to C'$ satisfies {\eqref{GenericGenusFormula}}. The family
${\mathcal C} \to {\mathcal S}$ obtained by the fiberwise desingularization
of ${\mathcal C}' \to \mathcal S$ defines $f : {\mathcal S} \to S$. It
follows from the construction that $f$ is surjective over every algebraically
closed field, in other words ${\mathcal C} \to {\mathcal S}$ is \textit{geometrically
surjective} in the sense of \cite[Definition 2.1]{LRRS}. The fiber of
$f$ is related to the space of choices of two points on $C$. It would be
interesting to give a precise description of the fiber.
\end{rem}

The next remark explains what {\eqref{GenericGenusFormula}} means.

\begin{rem}%
\label{rem:one-time-blow-up}
We claim that if the desingularization $C \to C'$ satisfies {\eqref{GenericGenusFormula}}, then, at each singular point $P\in C'$, the
desingularization of $C'$ at $P$ is obtained by the one-time blow-up centered
at $P$. Indeed, by a linear coordinate change, the structure ring around
$P$ is locally described as $\Fvtex _{q}[X,Y]/(G)$ with
$G\in \Fvtex _{q}[X,Y]$, where $P$ is the origin. Write
$G = G_{m} + G_{m+1} + \cdots $, where $m=m_{P}$ is the multiplicity at
$P$, and where $G_{i}$ is the homogenous part of degree $i$. By a linear coordinate change
again, we may assume that the $Y^{m}$-coefficient of $G_{m}$ is not zero
if we assume $q-1 \ge m$ (which is satisfied in our case: $q=9$ and
$m\le 3$). Then, the morphism from the strict transform of the blow-up
to $C'$ is locally described by the ring homomorphism
\begin{equation*}
\Fvtex _{q}[X,Y]/(G) \to \Fvtex _{q}[X,Z]/(\tilde G)
\end{equation*}
with ${\tilde G}(X,Z) = G(X,ZX)X^{-m}$ sending $(X,Y)$ to $(X,XZ)$. Its
cokernel is the $\frac{m(m-1)}{2}$-dimensional $\Fvtex _{q}$-vector space generated
by $Z^{i}X^{j}$ for $1\le i \le m-1$ and $0\le j \le i-1$. Thus, we have
the claim.
\end{rem}
Finally, we give a formula on the number of rational points on a non-special
curve $C$ over a finite field $\Fvtex _{q}$.
%
\begin{prop}%
\label{prop:one-time-blow-up}
We have
%
\begin{equation}
\label{formula:NumberOfRationalPoints}
\#C(\Fvtex _{q}) = \#C'(\Fvtex _{q}) + \sum _{P\in C'(\Fvtex _{q})} \left (\#V(h_{P})(
\Fvtex _{q}) - 1\right ),
\end{equation}
where $h_{P}$ is the $G_{m}$ as in {Remark~\ref{rem:one-time-blow-up}} for
$P$, i.e., the homogeneous part of the least degree (i.e., $m_{P}$) of
the Taylor expansion at $P$ of the defining polynomial of an affine model
containing $P$ of $C'$, and where $V(h_{P})$ is the closed subscheme of
${\mathbb P}^{1}$ defined by the ideal $(h_{P})$. Note that
$\#V(h_{P})(\Fvtex _{q}) - 1=0$ if $C'$ is nonsingular at $P$.
\end{prop}
\begin{proof}
Let $\pi : C \to C'$ be the desingularization of $C'$. As $\pi $ maps
$C(\Fvtex _{q})$ to $C'(\Fvtex _{q})$, it suffices to count the $\Fvtex _{q}$-rational
points on the fiber of each point $P \in C'(\Fvtex _{q})$. As in {Remark~\ref{rem:one-time-blow-up}}, by a linear coordinate change, we locally describe
the structure ring around $P$ as $\Fvtex _{q}[X,Y]/(G)$ with
$G\in \Fvtex _{q}[X,Y]$, where $P$ is the origin. By the construction of the
desingularization using blow-ups, $\pi ^{-1}(P)$ is the scheme obtained
by glueing in $\mathbb P^{1}$ the subscheme of ${\mathbb P}^{1}$ defined
by $h_{P}(1,Z)=0$ for $(1:Z)\in {\mathbb P}^{1}$, where
$h_{P}(1,Z)=G_{m}(X,ZX)X^{-m}$ and the subscheme of
${\mathbb P}^{1}$ defined by $h_{P}(W,1)=0$ for
$(W:1)\in {\mathbb P}^{1}$, where $h_{P}(W,1)=G_{m}(WY,Y)Y^{-m}$. (Note
that the fiber of $P$ does not depend on the higher-degree part of
$G$.) Hence $\pi ^{-1}(P)$ is isomorphic to $V(h_{P})$ over
$\Fvtex _{q}$. Clearly the proposition follows from this fact.
\end{proof}

If $h_{P}$ in {Proposition~\ref{prop:one-time-blow-up}} is quadratic, then
$\#V(h_{P})(\Fvtex _{q}) - 1$ is equal to $1$ if the discriminant
$\Delta (h_{P})$ of $h_{P}$ is a nonzero square and to $-1$ if
$\Delta (h_{P})$ is a nonzero non-square and to $0$ if
$\Delta (h_{P}) = 0$. From a computational point of view, it may have a
little advantage to use the following fact:
If $q=p^{2}$ and if $\Delta (h_{P})$ belongs to $\Fvtex _{p}$, then $\Delta (h_{P}) \ne 0$ is equivalent
to that $\Delta (h_{P})$ is a nonzero square in $\Fvtex _{p^{2}}$.

\begin{rem}%
\label{rem:non-mild}
The case of other kinds of singularities (i.e., the number of the singular
points of $C'$ is less than $5$) is one of our future works, see Section~\ref{sec:new6}. As in the non-special case, one could also construct an
algorithm to find a reduced equation defining $C'$ even in other cases,
once the type of the singularities is known. However, the number of singular
points of $C'$ can take any of $5$, $4$, $3$, $2$, and $1$. For example,
consider $C$ defined by
\begin{eqnarray*}
\varphi _{1} &=& x_{0}x_{1} + x_{3}x_{4},
\\
\varphi _{2} &=& x_{0}x_{3} + x_{2}x_{4} + x_{1}^{2},
\\
\varphi _{3} &=& x_{0}x_{4} + x_{2}^{2} + x_{2}x_{3}
\end{eqnarray*}
in characteristic $2$. Then $C'$ is defined by
\begin{equation*}
x_{1}^{5}x_{3} + x_{1}^{2}x_{2}^{4}+x_{1}^{2}x_{2}^{3}x_{3} + x_{2}^{2}x_{3}^{4}+x_{2}x_{3}^{5}.
\end{equation*}
Then $C'$ has a single singular point $(0:1:0)$, whose multiplicity is
two. This means that there are various kinds of singularities, and thus
it would require much effort to classify their possible types.
\end{rem}
%

\section{A concrete algorithm}
\label{sec:3}

Let $K$ be a finite field with characteristic $p$, and $K'$ a finite extension
field of $K$. As it was shown in Subsection \ref{subsec:sextic}, every
non-hyperelliptic and non-trigonal curve of genus $5$ over $K$ is realized
as the normalization of a sextic in $\mathbb{P}^{2}$. In particular, such
a curve $C$ is said to be non-special if it satisfies the condition stated in {Definition~\ref{def:generic}} of Subsection \ref{subsec:generic}.

In this section, we first present a concrete algorithm ({Algorithm~\ref{alg:I}} below) for enumerating all non-special curves $C$ of genus
$5$ over $K$ satisfying $\# C (K') \geq N$, where $N$ is a given positive
integer. After presenting the algorithm, we also give some remarks for
implementation.

\subsection{Algorithm}
\label{subsec:alg}

Let $L$ be a finite extension field of $K$, and let
$\{ P_{1}, P_{2}, P_{3}, P_{4}, P_{5} \}$ be a set of distinct five points
in $\mathbb{P}^{2}(L)$. Assume that $\mathrm{Gal}(L/K)$ stabilizes the
set $\{ P_{1}, P_{2}, P_{3}, P_{4}, P_{5} \}$. Given
$\{ P_{1}, P_{2}, P_{3}, P_{4}, P_{5} \}$ and an integer $N \geq 1$, {Algorithm~\ref{alg:I}} presented below enumerates all sextic forms $F$ in
$K [x,y,z]$ such that the projective scheme $C' : F = 0$ is a singular (irreducible) curve of
geometric genus $5$ in $\mathbb{P}^{2}$ with
$\mathrm{Sing}(C')=\{ P_{1}, P_{2}, P_{3}, P_{4}, P_{5} \}$ and
$\#C (K') \geq N$, where $C$ is the normalization of $C'$.

\begin{al}%
\label{alg:I}
\textit{Input}. $\{ P_{1}, P_{2}, P_{3}, P_{4}, P_{5} \}$, $K'$, and
$N \geq 1$.

\textit{Output}. A set $\mathcal{F}$ of sextic forms $F$ in $K [x,y,z]$ such
that $C' : F = 0$ is a singular (irreducible) curve of geometric genus
$5$ in $\mathbb{P}^{2}$ with
$\mathrm{Sing}(C')=\{ P_{1}, P_{2}, P_{3}, P_{4}, P_{5} \}$ and
$\#C (K') \geq N$, where $C$ is the normalization of $C'$.
\begin{enumerate}[\textup{(5)}]
\item[\textup{(1)}] Set $\mathcal{F} := \emptyset $.
\item[\textup{(2)}] Construct
$F = \sum _{i=1}^{28} a_{i} x^{\alpha _{1}^{(i)}} y^{\alpha _{2}^{(i)}}
z^{\alpha _{3}^{(i)}} \in K [a_{1} , \ldots , a_{28}] [x,y,z]$ with
\begin{equation*}
\{ (\alpha _{1}^{(1)}, \alpha _{2}^{(1)}, \alpha _{3}^{(1)}), \ldots ,
(\alpha _{1}^{(28)}, \alpha _{2}^{(28)}, \alpha _{3}^{(28)}) \} = \{ (
\alpha _{1}, \alpha _{2}, \alpha _{3}) \in (\mathbb{Z}_{\geq 0})^{
\oplus 3} : \alpha _{1} + \alpha _{2} + \alpha _{3} = 6\},
\end{equation*}
where $a_{1}, \ldots , a_{28}$ are indeterminates.
\item[\textup{(3)}] For each $\ell \in \{ 1, \ldots , 5 \}$:
\begin{enumerate}[(a)]
\item[(a)] Write $P_{\ell}=(p_{1}^{(\ell)} : p_{2}^{(\ell)} : p_{3}^{(\ell)})$ for
$p_{1}^{(\ell)}, p_{2}^{(\ell)}, p_{3}^{(\ell)} \in L$.
\item[(b)] Let $k$ be the minimal element in $\{1,2,3 \}$ such that
$p_{k}^{(\ell)} \neq 0$, and put $z_{k} = p_{k}^{(\ell)}$, $z_{i} = X$, and $z_{j}=Y$ for
$i,j\in \{1,2,3\} \smallsetminus \{k\}$ with $i < j$.
\item[(c)] Compute
$F_{\ell}:= F(z_{1},z_{2},z_{3}) \in L[a_{1}, \ldots , a_{28}][X,Y]$. Let
$f_{3 \ell - 2}$ and $f_{3 \ell - 1}$ be the coefficients of $X$ and
$Y$ in $F_{\ell}$ respectively, and $f_{3 \ell}$ the constant term of
$F_{\ell}$ as a polynomial in $X,Y$.
\end{enumerate}
\item[\textup{(4)}] Compute a basis
$\{ \mathbf{b}_{1}, \ldots , \mathbf{b}_{d} \} \subset K^{\oplus 28}$ for
the null-space of the linear system over $L$ defined by
$f_{t}(a_{1}, \ldots , a_{28}) = 0$ with $1 \leq t \leq 15$, where
$d$ denotes the dimension of the null-space. By {Proposition~\ref{dim=12}} we have $d=13$.
\item[\textup{(5)}] For each
$(v_{1}, \ldots , v_{d}) \in K^{\oplus d} \smallsetminus \{ (0,
\ldots , 0) \}$ with $v_{1} \in \{0, 1 \}$:
\begin{enumerate}[(a)]
\item[(a)] Compute
$\mathbf{c}: = \sum _{j=1}^{d} v_{j} \mathbf{b}_{j} \in K^{\oplus 28}
\smallsetminus \{ (0, \ldots , 0) \}$. For each $1 \leq i \leq 28$, we
denote by $c_{i}$ the $i$-th entry of $\mathbf{c}$.
\item[(b)] If
$\sum _{i=1}^{28} c_{i} x^{\alpha _{1}^{(i)}} y^{\alpha _{2}^{(i)}} z^{
\alpha _{3}^{(i)}} \in K[x,y,z]$ is irreducible, and if
$\sum _{i=1}^{28} c_{i} x^{\alpha _{1}^{(i)}} y^{\alpha _{2}^{(i)}} z^{
\alpha _{3}^{(i)}}=0$ in $\mathbb{P}^{2}$ has geometric genus $5$:
\begin{enumerate}[(iii)]
\item[(i)] Set
$F_{\mathbf{c}} := \sum _{i=1}^{28} c_{i} x^{\alpha _{1}^{(i)}} y^{
\alpha _{2}^{(i)}} z^{\alpha _{3}^{(i)}}$, and let $C'$ be the plane curve
in $\mathbb{P}^{2}$ defined by $F_{\mathbf{c}} = 0$.
\item[(ii)] Compute $\# C (K')$ by the formula {\eqref{formula:NumberOfRationalPoints}} given in Subsection \ref{subsec:generic}, where $C$ is the normalization of $C'$.
\item[(iii)] If $\#C (K') \geq N$, replace $\mathcal{F}$ by
$\mathcal{F} \cup \{ F_{\mathbf{c}} \}$.
\end{enumerate}
\end{enumerate}
\item[\textup{(6)}] Output $\mathcal{F}$.
\end{enumerate}
\end{al}

\begin{rem}%
\label{rem:alg}
\begin{enumerate}[(2)]
\item[(1)] In Step (4), we can take a basis
$\{ \mathbf{b}_{1}, \ldots , \mathbf{b}_{d} \}$ so that
$\mathbf{b}_{i} \in K^{\oplus 28}$ for all $1 \leq i \leq d$, by the following
general fact: Let $L/K$ be a separable extension of fields, and $E$ the
Galois closure of $L$ over $K$. If a linear system over $L$ is
$\mathrm{Gal}(E/K)$-stable, then each entry in the Echelon form of the
coefficient matrix of the system belongs to $K$.
\item[(2)] In Step (5)(b)(ii), we need to compute the discriminant
$\Delta _{\ell}$ of the degree-$2$ part of the Taylor expansion of
$F_{\mathbf{c}}$ at each $P_{\ell}$. To achieve this, we can use
$F_{\ell}$ computed in Step (3) as follows: Let
$D_{\ell} \in L[a_{1}, \ldots , a_{28}]$ denote the discriminant of the
degree-$2$ part of $F_{\ell}$ as a polynomial in $X,Y$. Then clearly
we have $\Delta _{\ell} = D_{\ell}(\mathbf{c})$.
\end{enumerate}
\end{rem}

\subsection{Correctness of our algorithm and remarks for implementation}
\label{sec3.2}

The correctness of {Algorithm~\ref{alg:I}} follows mainly from the definition
of multiplicity for a singular point on a projective plane curve: Indeed,
each polynomial $F_{\ell}$ computed in Step (3) is equal to the Taylor
expansion of $F$ at $P_{\ell}$. Since any vector $\mathbf{c} $ defining
$F_{\mathbf{c}}$ in Step (5)(b)(i) is a root of the system
$f_{3 \ell -2} = f_{3 \ell -2} = f_{3 \ell}=0$ for all
$1 \leq \ell \leq 5$, it follows that $P_{\ell}$ is a singular point
of $C' : F_{\mathbf{c}} = 0$ with multiplicity $2$ for each
$1 \leq \ell \leq 5$. Conversely, suppose $\mathbf{c}$ is a vector in
$K^{\oplus 28} \smallsetminus \{ (0, \ldots , 0) \}$ such that
$F_{\mathbf{c}} = 0$ defines an irreducible plane curve of geometric
genus $5$ with multiplicity $2$ at the points
$P_{1}, \ldots , P_{5}$. By the definition of multiplicity for a singularity,
the vector $\mathbf{c}$ is a root of the system
$f_{3\ell -2} = f_{3\ell -1} = f_{3\ell}=0$ for all
$1 \leq \ell \leq 5$. As it was described in {Remark~\ref{rem:alg}} (1),
each entry in the Echelon form of the coefficient matrix of the system
belongs to $K$, and thus $\mathbf{c}$ is a root of a linear system over
$K$ whose null-space over $L$ is the same as that of
$f_{3\ell-2} = f_{3\ell -1} = f_{3\ell}=0$ for all
$1 \leq \ell \leq 5$. Therefore, $\mathbf{c}$ can be expressed as
$\mathbf{c} = \sum _{j=1}^{d} v_{j} \mathbf{b}_{j}$, where
$(v_{1}, \ldots , v_{d}) \in K^{\oplus d} \smallsetminus \{ (0,
\ldots , 0) \}$. Note that it suffices to compute the part with
$v_{1} \in \{0,1\}$, since scalar multiplication does not change the
isomorphism class of the sextic.

We implemented {Algorithm~\ref{alg:I}} over MAGMA V2.25-8~\cite{Magma},
\cite{MagmaHP} in its 64-bit version. Details of our computational environment
will be provided in Section~\ref{sec:new5}. For Step (5)(b), our implementation
makes use of MAGMA's built-in functions such as
\texttt{IsIrreducible}, \texttt{GeometricGenus}, and
\texttt{Variety}. In particular, the function \texttt{Variety} is employed
to compute $\# C' (K') = \# V (F_{\mathbf{c}})$.

\section{Computational results in characteristic 3}
\label{sec4}

This section presents our computational results in characteristic
$p=3$ obtained by applying {Algorithm~\ref{alg:I}}. For this, we first explicitly
determine possible positions in $\mathbb{P}^{2}$ of singular points
of the sextic model associated to a non-special curve of genus $5$ over
$K= \mathbb{F}_{3}$, considering the action by
$\PGL _{3}(\Fvtex _{3})$. With the classification, we execute {Algorithm~\ref{alg:I}} for each position type, and then obtain non-special curves
of genus $5$ over $\mathbb{F}_{3}$ with many $\mathbb{F}_{9}$-rational
points.

\subsection{Position analysis of singular points in the case where $K = \Fvtex _{3}$}
\label{sec:4}

In this subsection, we classify positions of five points
$\{P_{1},\ldots ,P_{5}\}$ in ${\mathbb P}^{2}$ which can be singular points
of our sextic model $C' : F=0$ associated to a non-special curve of genus $5$ over $\Fvtex _{3}$, where we identify two positions
$\{P_{1},\ldots ,P_{5}\}$ and $\{P'_{1},\ldots ,P'_{5}\}$ if there exists
an element $g$ of $\PGL _{3}(\Fvtex _{3})$ that sends $\{P_{1},\ldots ,P_{5}
\}$ to $\{P'_{1},\ldots ,P'_{5}\}$. This is the problem to enumerate
the orbits of an action of a finite group on a finite set. We solved it
by using MAGMA. In the following, we state only the results.

The Frobenius map $\sigma $ over $\Fvtex _{3}$ (defined as the map raising
each entry to the third power) makes a permutation of
$\{P_{1},\ldots , P_{5}\}$, i.e.,
\begin{equation*}
\{P_{1},\ldots , P_{5}\} = \{\sigma (P_{1}),\sigma (P_{2}),\sigma (P_{3}),
\sigma (P_{4}),\sigma (P_{5})\}.
\end{equation*}
The pattern of the Frobenius orbits in $\{P_{1},\ldots , P_{5}\}$ is either
of $(1,1,1,1,1)$, $(1,1,1,2)$, $(1,2,2)$, $(1,1,3)$, $(2,3)$,
$(1,4)$, and $(5)$, where for example $(1,2,2)$ means that
$\{P_{1},\ldots , P_{5}\}$ consists of three Frobenius orbits each of which
has cardinality $1$, $2$, and $2$ respectively.

\paragraph*{Case (1,1,1,1,1)}
This is the case where every singular point is $\Fvtex _{3}$-rational. By a
linear transformation by an element of $\PGL _{3}(\Fvtex _{{3}})$, we may
assume
\begin{equation*}
P_{1}=(1:0:0),\quad P_{2}=(0:1:0),\quad P_{3}=(0:0:1).
\end{equation*}
A computation shows that every position of
$\{P_{1},\ldots , P_{5}\}$ such that any four points among them are not
contained in a line is equivalent by the linear transformation by
a diagonal matrix and a permutation of $\{x,y,z\}$ to either of the following
two cases:

\bigskip\noindent
(1) $P_{4} = (1:1:0)$ and $P_{5} = (0:1:1), \qquad (2)\ P_{4} = (1:1:0)$ and $P_{5} = (1:2:1)$.

\begin{ex}
\label{exmp4.1.1}
In the case of characteristic $3$, the defining equation of any sextic having singular
points $P_{1}=(0:0:1)$, $P_{2}=(0:1:0)$, $P_{3}=(1:0:0)$
$P_{4}=(1:1:0)$, $P_{5}=(0:1:1)$ is
\begin{eqnarray*}
{F}&=& a_{1} x^{4}y^{2} +a_{2} x^{4}yz +a_{3} x^{4} z^{2} +a_{4} x^{3}
y^{3} + a_{5} x^{3} y^{2} z + a_{6} x^{3} y z^{2} + a_{7} x^{3} z^{3}
\\
&& + a_{8} x^{2} y^{4} + a_{9} x^{2} y^{3} z + a_{10} x^{2} y^{2} z^{2}
+ a_{11} x^{2} y z^{3} + a_{12} x^{2} z^{4} + a_{13} x y^{4} z
\\
&& + a_{14} x y^{3} z^{2} + a_{15} x y^{2} z^{3} + a_{16} x y z^{4} + a_{17}
y^{4} z^{2} + a_{18} y^{3} z^{3} + a_{19} y^{2} z^{4}
\end{eqnarray*}
with
\begin{equation*}
\begin{cases}
a_{1} + a_{4} + a_{8} = 0,
\\
2a_{1}+a_{8} = 0,
\\
a_{2}+a_{5}+a_{9}+a_{13} = 0,
\end{cases}
\quad
\begin{cases}
a_{17}+a_{18}+a_{19} = 0,
\\
a_{13} + a_{14} + a_{15} + a_{16} =0,
\\
a_{17}+2a_{19}=0.
\end{cases}
\end{equation*}
The quadratic form $h_{P}(=G_{2})$ with the notation of {Remark~\ref{rem:one-time-blow-up}} and {Proposition~\ref{prop:one-time-blow-up}} associated to the singular point
$P_{i}$ (of multiplicity $2$) is as follows:
\begin{eqnarray*}
{h_{P_{1}}} &:=& a_{12} x^{2} + a_{16} xy + a_{19} y^{2},
\\
{h_{P_{2}}} &:=& a_{8} x^{2} + a_{13} xz + a_{17} z^{2},
\\
{h_{P_{3}}} &:=& a_{1} y^{2} + a_{2} yz + a_{3} z^{2},
\\
{h_{P_{4}}} &:=& a_{1}(y-x)^{2} + (a_{2} + 2a_{5} + a_{13})(y-x)z +(a_{3}
+ a_{6} + a_{10} + a_{14} + a_{17})z^{2},
\\
{h_{P_{5}}} &:=& (a_{8} + a_{9} + a_{10} + a_{11} + a_{12})x^{2} + (a_{13}
+ 2 a_{14} + a_{16})x(y-z) + a_{17} (y-z)^{2}
\end{eqnarray*}
respectively.
\end{ex}
%

\paragraph*{Case (1,1,1,2) with linearly independent $P_{1}$, $P_{2}$, $P_{3}$}
We consider the case where $\{P_{1}, \ldots , P_{5}\}$ contains three
$\Fvtex _{{3}}$-rational points, say $P_{1},P_{2},P_{3}$, where $P_{1}$,
$P_{2}$, and $P_{3}$ are linearly independent, and the other points are defined
over $\Fvtex _{{{3}}^{2}}$ (not over $\Fvtex _{{3}}$) and are conjugate to each
other, i.e., $(P_{4},P_{5})=(\sigma (P_{5}),\sigma (P_{4}))$. By a transformation
by an element of $\PGL _{3}(\Fvtex _{{3}})$, we may assume
\begin{equation*}
P_{1}=(1 : 0 : 0),\quad P_{2}=(0 : 1 : 0),\quad P_{3}=(0 : 0 : 1).
\end{equation*}
Let $\zeta $ be a primitive element of $\Fvtex _{{{3}}^{2}}$. A computation
shows that every position of $\{P_{1},\ldots , P_{5}\}$ such that any four
points among them are not contained in a line is equivalent by a linear
transformation by a diagonal matrix and a permutation of
$\{x,y,z\}$ to either of the three cases:

\bigskip\noindent
$(1)\ P_{4} = (1 : \zeta ^{5} : \zeta ^{7}),\qquad (2)\
P_{4} = (1 : \zeta ^{7} : 1),\qquad (3)\
P_{4} = (1 : \zeta ^{2} : \zeta ^{2})$

\bigskip\noindent
with $P_{5} = \sigma (P_{4})$.

\paragraph*{Case (1,1,1,2) with linearly dependent $P_{1}$, $P_{2}$, $P_{3}$}
We consider the case where $\{P_{1},P_{2},P_{3},P_{4},P_{5}\}$ contains
three $\Fvtex _{{3}}$-rational points, say $P_{1},P_{2},P_{3}$, where
$P_{1}$, $P_{2}$, and $P_{3}$ are linearly dependent, and the other points
are defined over $\Fvtex _{{{3}}^{2}}$ and are conjugate to each other.
By a transformation by an element of $\PGL _{3}(\Fvtex _{{3}})$, we may assume
\begin{equation*}
P_{1}=(1 : 0 : 0),\quad P_{2}=(0 : 1 : 0),\quad P_{3}=(1:1:0).
\end{equation*}
Let $\zeta $ be a primitive element of $\Fvtex _{{{3}}^{2}}$.
As in the case $(1,1,1,2)$ with linearly independent $P_1,P_2,P_3$, we have three
equivalent classes:

\bigskip\noindent
$(1)\ P_{4} = (1:\zeta ^{5}:\zeta ^{7}),\qquad
(2)\ P_{4} = (1:1:\zeta ^{7}),\qquad
(3)\ P_{4} = (1:\zeta ^{2};\zeta ^{2})$

\bigskip\noindent
with $P_{5} = \sigma (P_{4})$.

\paragraph*{Case (1,2,2)}
We consider the case where $\{P_{1},P_{2},P_{3},P_{4},P_{5}\}$ contains
one $\Fvtex _{{3}}$-rational point $P_{1}$ and two pairs $(P_{2},P_{3})$ and
$(P_{4},P_{5})$ of $\Fvtex _{{{3}}^{2}}$-rational points, where
$P_{3}=\sigma (P_{2})$ and $P_{5} = \sigma (P_{4})$. By a transformation
by an element of $\PGL _{3}(\Fvtex _{{3}})$, we may assume
\begin{equation*}
P_{1}=(1:0:0).
\end{equation*}
Let $\zeta $ be a primitive element of $\Fvtex _{{{3}}^{2}}$. We have five
equivalent classes:

\bigskip\noindent
(1) $(P_{2},P_{4})= ((1:1:\zeta ^{2}),(0:1:\zeta ^{6})), \qquad
(2)\ (P_{2},P_{4})= ((1:2:\zeta ^{5}),(1:\zeta ^{2}:\zeta ^{7}))$,

\noindent
(3) $(P_{2},P_{4})= ((1: \zeta : 1),(1:\zeta ^{7}:\zeta ^{7})),
\qquad (4)\ (P_{2},P_{4})= ((1:\zeta ^{2}:\zeta ^{6}),(1:0:\zeta ^{5}))$,

\noindent
(5) $(P_{2},P_{4})= ((1:0:\zeta ^{2}),(1:1:\zeta ^{5}))$.

\paragraph*{Case (1,1,3)}
We consider the case where $\{P_{1},P_{2},P_{3},P_{4},P_{5}\}$ contains
two $\Fvtex _{{3}}$-rational points $P_{1}, P_{2}$ and conjugate three points
$(P_{3},P_{4},P_{5})$ of $\Fvtex _{{{3}}^{3}}$-rational points, where
\begin{equation*}
(P_{3},P_{4},P_{5})=(P_{3},\sigma (P_{3}), \sigma ^{2}(P_{3})).
\end{equation*}
By a transformation by an element of $\PGL _{3}(\Fvtex _{{3}})$, we may assume
\begin{equation*}
P_{1}=(1:0:0),\quad P_{2}=(0:1:0).
\end{equation*}
Let $\zeta $ be a primitive element of $\Fvtex _{{{3}}^{3}}$. We have four
cases

\bigskip\noindent
(1) $P_{3}=(1:2:\zeta ^{5}), \quad (2)\
P_{3}=(1:\zeta ^{6}:\zeta ^{25}), \quad (3)\
P_{3}=(1:\zeta ^{17}:\zeta ^{2}),\quad (4)\
P_{3}=(1:2:\zeta ^{10})$.

\paragraph*{Case (2,3)}
We consider the case where
$\{P_{1},P_{2},P_{3},P_{4},P_{5}\}$ contains conjugate two points
$(P_{1},P_{2})$ of $\Fvtex _{{{3}}^{2}}$-rational points with
$(P_{1},P_{2})=(P_{1},\sigma (P_{1}))$ and conjugate three points
$(P_{3},P_{4},P_{5})$ of $\Fvtex _{{{3}}^{3}}$-rational points, where
$(P_{3},P_{4},P_{5})=(P_{3},\sigma (P_{3}), \sigma ^{2}(P_{3}))$. By a
transformation by an element of $\PGL _{3}(\Fvtex _{{3}})$, we may assume
\begin{equation*}
P_{1}=(1: \xi : 0){,}
\end{equation*}
where $\xi $ is a primitive element of $\Fvtex _{{{3}}^{2}}$. Let
$\zeta $ be a primitive element of $\Fvtex _{{{3}}^{3}}$. We have three cases:

\bigskip\noindent
(1) $P_{3} = (1:2:\zeta ^{5}), \qquad (2)\
P_{3} = (1:\zeta ^{22}:2), \qquad (3)\
P_{3} = (1:\zeta ^{17}:\zeta ^{2})$.

\paragraph*{Case (1,4)}
We consider the case where $\{P_{1},P_{2},P_{3},P_{4},P_{5}\}$ contains
one $\Fvtex _{3}$-rational point $P_{1}$ and the other 4 points are over
$\Fvtex _{3^{4}}$ and are conjugate to each other, say
\begin{equation*}
(P_{2},P_{3},P_{4},P_{5})=(P_{2},\sigma (P_{2}), \sigma ^{2}(P_{2}),
\sigma ^{3}(P_{2})).
\end{equation*}
By a transformation by an element of $\PGL _{3}(\Fvtex _{3})$, we may assume
$P_{1}=(1:0:0)$. Let $\zeta $ be a primitive element of
$\Fvtex _{3^{4}}$. We have five equivalent classes:

\bigskip\noindent
(1) $P_{2} = (1:\zeta ^{75}:\zeta ^{49}), \qquad (2)\
P_{2} = (1:\zeta ^{8}:\zeta ^{70}), \qquad (3)\
P_{2} = (1:\zeta ^{59}:\zeta ^{53})$,

\noindent
(4) $P_{2} = (1:\zeta ^{72}:\zeta ^{29}), \qquad (5)\
P_{2} = (1:\zeta ^{5}:\zeta ^{75})$.

\paragraph*{Case (5)}
This is the case where singular points on $C'$ are defined over
$\Fvtex _{3^{5}}$ (but not over $\Fvtex _{3}$). Then
$\{P_{1}, \ldots , P_{5}\}$ consists of a single Frobenius orbit, namely
\begin{equation*}
\{P_{1}, P_{2}, P_{3}, P_{4}, P_{5}\}=\{P_{1}, \sigma (P_{1}),
\sigma ^{2}(P_{1}), \sigma ^{3}(P_{1}),\sigma ^{4}(P_{1})\}.
\end{equation*}
In this case, the five points are determined only by $P_{1}$. A computation
says that there are two cases:

\bigskip\noindent
(1) $P_{1} = (1: \zeta ^{127}: \zeta ^{143}), \qquad
(2)\ P_{1} = (1: \zeta ^{218}: \zeta ^{72})$

\bigskip\noindent
for a primitive element $\zeta $ of $\Fvtex _{3^{5}}$.

\subsection{Non-special curves of genus 5 over $\mathbb{F}_{3}$ with many $\mathbb{F}_{9}$-rational points}
\label{sec:new5}

For $K=\Fvtex _{p}$ and $K'=\Fvtex _{p^{2}}$ with $p=3$, we executed {Algorithm~\ref{alg:I}} over MAGMA V2.25-8 in each case given in Section~\ref{sec:4}, in order to prove {Theorem~\ref{thm:main}}. In our computations,
we choose $N=32$ as the input, since $32$ is the maximal number among
the known numbers of $\Fvtex _{9}$-rational points of genus-five curves over
$\Fvtex _{9}$. Our implementations over MAGMA were conducted on a PC equipped
with an Ubuntu 18.04.5 LTS OS, utilizing a 3.50GHz quad-core CPU
(Intel(R) Xeon(R) E-2224G) and 64GB of RAM. It took about 58 hours in
total to execute the algorithm for obtaining {Theorem~\ref{thm:main}}. For
curves $C$ with $\# C (\mathbb{F}_{{{3}}^{2}}) \ge 32$ listed up, we also
computed their Weil polynomials over MAGMA. We have also obtained explicit
equations for non-special curves of genus 5 over
$\mathbb{F}_{3}$ having $32$ $\mathbb{F}_{9}$-rational points. Here,
we show some examples:
\begin{enumerate}[(5)]
\item[(1)] Case $(1,1,1,1,1)$ with $P_{5}=(1:2:1)$. The normalization $C$ of the sextic $C':F=0$ with
\begin{eqnarray*}
F &=&x^{4}y^{2} + x^{3}y^{3} + x^{2}y^{4} + x^{4}yz + x^{3}y^{2}z + 2x^{2}y^{3}z
+ 2xy^{4}z + 2x^{2}yz^{3} + xy^{2}z^{3} + x^{2}z^{4} \\
&&{}+ 2xyz^{4} + y^{2}z^{4}
\end{eqnarray*}
has 32 $\Fvtex _{9}$-rational points with Weil polynomial
$(t + 3)^{6}(t^{2} + 2t + 9)^{2}$.
\item[(2)] Case $(1,1,1,2)$ with linearly independent $P_{1},P_{2},P_{3}$ where
$P_{4}=(1:\zeta ^{2}:\zeta ^{2})$ for a primitive element $\zeta$ of $\mathbb{F}_{3^2}$.
The normalization $C$ of the sextic $C':F=0$ with
\begin{equation*}
F=x^{2}y^{4} + x^{4}yz + 2y^{4}z^{2} + x^{2}z^{4} + 2y^{2}z^{4}
\end{equation*}
has 32 $\Fvtex _{9}$-rational points with Weil polynomial
$(t^{2} + 2t + 9)(t^{2} + 5t + 9)^{4}$.
\item[(3)] Case $(1,1,1,2)$ with linearly dependent $P_{1},P_{2},P_{3}$ where
$P_{4}=(1:\zeta ^{5}:\zeta ^{7})$ for a primitive element $\zeta$ of $\mathbb{F}_{3^2}$.
The normalization $C$ of the sextic $C':F=0$ with
\begin{eqnarray*}
F&=&x^{4}y^{2} + x^{3}y^{3} + x^{2}y^{4} + 2x^{3}y^{2}z + xy^{4}z + x^{2}y^{2}z^{2}
+ 2xy^{3}z^{2} + 2x^{3}z^{3}
\\
&& + 2y^{3}z^{3} + x^{2}z^{4} + 2xyz^{4} + 2y^{2}z^{4} + z^{6}
\end{eqnarray*}
has 32 $\Fvtex _{9}$-rational points with Weil polynomial
$(t+3)^{4}(t^{2} + 2t + 9)(t^{2}+4t+9)^{2}$.
\item[(4)] Case $(1,2,2)$ with $P_{2}=(1:2:\zeta ^{5})$ and
$P_{4}=(1:\zeta ^{2}:\zeta ^{7})$, where $\zeta$ is a primitive element of $\mathbb{F}_{3^2}$.
The normalization $C$ of the sextic $C':F=0$ with
\begin{eqnarray*}
F&=&x^{4}y^{2} + 2x^{3}y^{3} + 2xy^{5} + 2y^{6} + x^{2}y^{3}z + 2y^{5}z
+ 2x^{4}z^{2} + x^{3}yz^{2} + xy^{3}z^{2}
\\
&&+ 2x^{3}z^{3} + x^{2}yz^{3} + xyz^{4} + y^{2}z^{4} + 2xz^{5} + 2yz^{5}
+ z^{6}
\end{eqnarray*}
has 32 $\Fvtex _{9}$-rational points with Weil polynomial
$(t + 3)^{2} (t^{4} + 8t^{3} + 32t^{2} + 72t + 81)^{2}$.
\item[(5)] Case $(1,1,3)$ with $P_{3}=(1:2:\zeta ^{5})$, where $\zeta$ is a primitive element of $\mathbb{F}_{3^3}$.
The normalization $C$ of the sextic $C':F=0$ with
\begin{eqnarray*}
F&=&x^{4}y^{2} + x^{2}y^{4} + x^{4}yz + 2xy^{4}z + 2x^{4}z^{2} + 2x^{3}yz^{2}
+ x^{2}y^{2}z^{2} + 2xy^{3}z^{2} + 2y^{4}z^{2} + 2x^{3}z^{3}
\\
&& + 2x^{2}yz^{3} + xy^{2}z^{3} + y^{3}z^{3} + 2x^{2}z^{4} + xyz^{4} +
2y^{2}z^{4} + xz^{5} + 2yz^{5} + 2z^{6}
\end{eqnarray*}
has 32 $\Fvtex _{9}$-rational points with Weil polynomial
$(t^{2} + 2t + 9)(t^{2} + 5t + 9)^{4}$.
\item[(6)] Case $(2,3)$ with $P_{1}=(1:\xi :0)$ and
$P_{3}=(1:2:\zeta ^{5})$, where $\xi$ is a primitive element of $\mathbb{F}_{3^2}$ and where $\zeta$ is a primitive element of $\mathbb{F}_{3^3}$.
The normalization $C$ of the sextic $C' : F=0$ with
\begin{eqnarray*}
F &=& x^{5}y + x^{4}y^{2} + 2x^{2}y^{4} + 2y^{6} + x^{5}z + x^{4}yz + 2x^{3}y^{2}z
+ 2x^{3}yz^{2} + x^{2}y^{2}z^{2} + x^{3}z^{3}
\\
& & + 2x^{2}yz^{3} + xy^{2}z^{3}+ 2x^{2}z^{4} + xyz^{4} + 2y^{2}z^{4} +
xz^{5} + z^{6}
\end{eqnarray*}
has 32 $\Fvtex _{9}$-rational points with Weil polynomial
$(t^{2} + 2t + 9)(t^{2} + 5t + 9)^{4}$.
\end{enumerate}

\section{Concluding remarks with some open problems}
\label{sec:new6}

We provided a new effective parametrization for the space of curves of
genus $5$ that are neither hyperelliptic nor trigonal. We realized such
a curve as the normalization of a sextic in $\mathbb{P}^{2}$, and termed
a non-special curve if the associated sextic had five double points. It
was also proved that the dimension of the space of non-special curves of
genus $5$ does not exceed $12$, where the value $12$ is equal to the dimension
of the moduli space of curves of genus $5$. As an application, we also
presented an algorithm for enumerating non-special curves of genus
$5$ over finite fields with more rational points than a prescribed bound.
By executing the algorithm
on MAGMA, we showed that the maximal number of $\mathbb{F}_{9}$-rational
points of non-special curves of genus $5$ over $\mathbb{F}_{3}$ is
$32$.

Our explicit parametrization and the algorithm presented in this paper
may find fruitful applications both in theory and in computation. One of such
applications is the classification of possible invariants (Hasse-Witt rank
and so on) for non-hyperelliptic and non-trigonal curves of genus
$5$. Finally, we list some considerable open problems:
\begin{enumerate}[(d)]
\item[(a)] Extend the parametrization to the case where curves have more
complex singularities. Concretely, present a parametrization for non-hyperelliptic
and non-trigonal curves $C$ of genus $5$, such that the equality {\eqref{GenericGenusFormula}} in Subsection \ref{subsec:generic} does not
hold (cf.\ {Remark~\ref{rem:non-mild}}). A more general problem is to give
an explicit model in $\mathbb{P}^{2}$ for non-hyperelliptic curves of the
other genus $\geq 4$. Cf.\ a parametrization of generic curves of genus
$3$ is presented in \cite{LRRS}, where an equation with $7$ parameters
(the moduli dimension is $6$) is given. We also refer to \cite{BG}, where
the authors give representative families for all strata by automorphism
group (except for the stratum associated with automorphism group
$\mathbb{Z}/5\mathbb{Z}$) of smooth plane curves of genus $6$.
\item[(b)] Present methods for computing invariants of non-special curves
of genus $5$, such as Cartier-Manin and Hasse-Witt matrices. Computing
Cartier-Manin and Hasse-Witt matrices enables us to determine whether given
curves are superspecial or not, as in \cite{KH16}, \cite{KH18},
\cite{trigonal}, and \cite{KH17}. To compute Cartier-Manin matrices, a method
provided in \cite{SV} for a plane curve can be applied.
\item[(c)] Construct an algorithm for determining whether given two non-special
curves of genus $5$ are isomorphic or not. With the algorithm, we might
present a characterization of the moduli space of curves of genus
$5$, employing the concept of representative families as described in
\cite{LRRS} (cf.\ {Remark~\ref{rem:moduli}}). Computing the automorphism
group of such a curve is also an interesting problem. Cf.\ in the case
of non-hyperelliptic curves of genus 4, the authors (resp. the authors
and Senda) presented an algorithm for the isomorphism test (resp. for
computing automorphism groups) in \cite{KH17} (resp. \cite{KHS17}).
\item[(d)] Improve the efficiency of {Algorithm~\ref{alg:I}}, and then enumerate
non-special curves of genus $5$ having many rational points for
$p>3$. To accomplish this, it is important to reduce the number of curves
in the search space.
\end{enumerate}

\footnote[0]{
E-mail address of the first author: \texttt{m-kudo@fit.ac.jp}\\
E-mail address of the second author: \texttt{harasita@ynu.ac.jp}
}

\end{document}